\documentclass[hidelinks,onefignum,onetabnum]{siamart251104}

\usepackage[dvipsnames]{xcolor}
\definecolor{linkcolor}{RGB}{74, 102, 146}
\usepackage{hyperref}
\usepackage{cleveref}
\usepackage{url}
\usepackage{amsmath, amssymb, amsfonts}
\usepackage{graphicx}
\usepackage{comment}
\usepackage{subcaption}
\usepackage{booktabs}
\usepackage{wrapfig}
\usepackage{multirow}
\usepackage{multicol}
\usepackage{lipsum}
\usepackage{epstopdf}
\usepackage{algorithmic}
\ifpdf
  \DeclareGraphicsExtensions{.eps,.pdf,.png,.jpg}
\else
  \DeclareGraphicsExtensions{.eps}
\fi

\newsiamremark{remark}{Remark}
\newsiamremark{hypothesis}{Hypothesis}
\crefname{hypothesis}{Hypothesis}{Hypotheses}
\newsiamthm{claim}{Claim}
\newsiamremark{fact}{Fact}
\crefname{fact}{Fact}{Facts}

\headers{Spectrally Safe Neural Operator}{J.~Oh, Y.~Lee, J.~Darbon, and G.~E.~Karniadakis}

\title{Spectrally Safe Neural Operator Warm-Starts for Large-Scale Newton Solvers}

\author{
Jaemin Oh\thanks{Division of Applied Mathematics, Brown University, Providence, RI 02912, USA (\email{jaeminoh.math@gmail.com}).}
\and
Youngkyu Lee\footnotemark[1]
\and
J\'er\^ome Darbon\footnotemark[1]
\and
George Em Karniadakis\footnotemark[1]
}

\usepackage{amsopn}

\ifpdf
\hypersetup{
  pdftitle={Hybrid Newton Failure Modes},
  pdfauthor={Jaemin Oh}
}
\fi

\begin{document}

\maketitle

\begin{abstract}
Neural operators are increasingly used to warm-start Newton solvers for nonlinear PDEs, on the premise that a low test error places the initial guess inside the basin of attraction. We show that this premise is unreliable. 
An operator trained to the relative \(L^2\) error \(O(10^{-3})\) can still produce an initial state in which the discrete Jacobian is indefinite, because the mean-squared training controls error on average while leaving localized pointwise violations of the underlying physics. 
For a nearly incompressible hyperelasticity problem, we trace this to the predicted volume change: the operator disperses \(\mathrm{det} F\) well away from one, and the resulting Jacobian acquires negative eigenvalues even when the predicted field is visually indistinguishable from the reference. At a small scale, this is a nuisance; at a multi-million degree-of-freedom scale, it is disqualifying, since the conjugate gradient and other Krylov solvers needed for memory-feasible Newton steps assume a definite spectrum.
We then show that a short, label-free fine-tuning phase -- penalizing the operator against the discrete energy, with no additional solution data -- shifts the Jacobian spectrum back to positive definite. Combined with an inexact outer loop, this gives a warm-started Newton method that converges across the full loading range where the unregularized operator fails, reaching up to 5.4\(\times\) wall-clock speedup over incremental continuation on a 3D problem with 6.4 million degrees of freedom.
\end{abstract}

\begin{keywords}
Neural operators, Newton's method, Hybrid methods, Failure modes
\end{keywords}

\begin{MSCcodes}
65N22, 65H20, 68T07
\end{MSCcodes}

\section{Introduction}
Since the advent of neural operators~\cite{lu2021learning, li2021fourier}, data-driven surrogates have emerged as a powerful paradigm for numerically solving partial differential equations (PDEs). In many-query settings -- where traditional solvers must be executed repeatedly under varying parameters, boundary conditions, or source terms -- classical numerical methods incur prohibitive computational overhead. Conversely, once trained across diverse parametric spaces, boundary conditions, and geometric configurations, neural operators can predict PDE solutions for unseen configurations rapidly~\cite{lee2025finite,choi2024spectral}. This exceptional evaluation speed has catalyzed their adoption in large-scale simulations, engineering design, and many-query optimization tasks~\cite{bonev2023spherical, wu2024transolver, lu2022multifidelity}. Nevertheless, a primary bottleneck of purely data-driven surrogates remains their limited accuracy~\cite{bacho2025operator}.

In principle, universal approximation theorems guarantee the existence of a neural operator capable of mapping underlying solution operators with arbitrary precision~\cite{chen1995universal,boulle2023elliptic}. In practice, however, sampling errors during dataset generation and non-convex optimization hurdles during training frequently prevent these models from reaching their theoretical limits~\cite{lee2024training,hong2026error}. To bridge this gap, hybrid frameworks combining neural operators with classical numerical methods have emerged as a promising paradigm~\cite{zhang2022blending, kopanivcakova2025deeponet, eshaghi2026nows}. By pairing the rapid inference of data-driven surrogates with the convergence of classical solvers, these hybrid methodologies accelerate computation while strictly preserving exactness.

For linear problems, neural operators can be used as preconditioners for iterative methods. Motivated by the observation that neural networks inherently resolve low-frequency components first -- a phenomenon known as spectral bias~\cite{rahaman2019spectral} -- Zhang et al.~\cite{zhang2022blending} proposed blending neural operators with classical relaxation methods (e.g., Jacobi or Gauss--Seidel) that efficiently eliminate high-frequency errors. Similarly, Kopanivcakova and Karniadakis~\cite{kopanivcakova2025deeponet} leveraged the trunk networks of deep operator networks~\cite{lu2021learning} as localized basis functions to construct prolongation and restriction operators within the subspace correction framework~\cite{xu1992iterative}. Taking a direct initialization approach, Eshaghi et al.~\cite{eshaghi2026nows} initialized standard Krylov-subspace iterative solvers with neural operator predictions to reduce the iteration count required for convergence.

Extending these hybrid methods to nonlinear problems requires coupling neural operators with Newton-based iterative schemes. Although the classical Newton method exhibits quadratic convergence, this rapid behavior is local and lacks global guarantees; convergence is achieved only if the initial guess resides within the local basin of attraction. Because trained neural operators provide high-quality approximations to solutions, they can serve as excellent initial guesses that reliably place the system within this basin. Consequently, several neural operator-based warm-start Newton frameworks (WS-Newton) have recently emerged. In solid mechanics, the Neural-Initialized Newton framework proposed by Taghikhani et al.~\cite{taghikhani2025neural} learns parametric solution spaces to warm-start finite element simulations. Similarly, Wang et al.~\cite{wang2026pretrain, wang2026pretraining} utilized physics-informed and data-driven training stages to construct initial solutions, which are subsequently refined via classical finite element routines. Alternatively, Lee et al.~\cite{lee2025neural} proposed a neural operator-based nonlinear preconditioning strategy (NP-Newton) where a trained neural operator repeatedly corrects intermediate iterates, thereby helping the solver escape line-search traps and severe unbalanced nonlinearities~\cite{lanzkron1996analysis}. Beyond the Newton method, Zhou et al.~\cite{zhou2025neural} demonstrated that neural operator-based super-fidelity schemes can successfully warm-start steady-state fluid flow simulations.

Despite these foundational contributions, the existing literature -- including WS-Newton and NP-Newton -- lacks a systematic exposition of the challenges and failure modes that arise when scaling neural operator-based hybrid Newton methods to high-dimensional, multi-million degree-of-freedom (DOF) nonlinear 3D systems.

To address this critical gap, this work systematically identifies the fundamental failure modes inherent to neural operator-based hybrid Newton methods. Importantly, we demonstrate that hybridization can fail even when the underlying neural operator achieves an excellent average relative $L^2$ error of $O(10^{-3})$ on the test dataset. We show that this hybridization failure stems from the objective functions of data-driven neural operators, which typically minimize errors in Euclidean spaces, such as the mean-squared error. Consequently, while their predictions may appear highly accurate globally, they frequently exhibit severe, localized violations of physical constraints. In nonlinear regimes, these localized errors dramatically degrade the condition number or induce spurious negative eigenvalues in the discrete Jacobian matrices. This phenomenon fundamentally exacerbates unbalanced nonlinearities, trapping the standard Newton loop in local minima or line-search stagnation points.

To overcome these vulnerabilities, we introduce a streamlined, dual-stage computational framework. We first pre-train a neural operator in a data-driven manner to capture large-scale features. This is followed by a targeted, physics-informed fine-tuning stage. By explicitly regularizing the neural operator against the discrete algebraic operator, our approach suppresses localized errors and ensures compatibility with the classical solver. This strategy guarantees stable convergence where WS-Newton fails, achieving up to 
5.4× wall-clock speedup over incremental continuation on a 3D problem with 6.4 million degrees of freedom.

The remainder of this paper is organized as follows. \Cref{sec:preliminaries} revisits Newton's method and neural operator-based hybrid formulations. \Cref{sec:examples} details the potential failure modes inherent to these hybrid frameworks through four examples. Finally, \Cref{sec:conclusion} discusses our findings and concludes the work.

\section{Preliminaries}\label{sec:preliminaries}
\subsection{Newton's method}
Consider a nonlinear algebraic system resulting from the spatial discretization of a governing PDE, expressed as:
\[
    r(u) = 0,
\]
where $u \in \mathbb{R}^N$ represents the state vector of DOFs, and $r: \mathbb{R}^N \to \mathbb{R}^N$ denotes the discrete nonlinear residual vector. The classical Newton--Raphson method iteratively updates an initial state estimate $u_0$ via the following linearized sequence:
\begin{equation}\label{eq:newton}
    J(u_k) \delta u_k = -r(u_k), \quad u_{k+1} = u_k + \eta_k \delta u_k,
\end{equation}
where $J(u_k) = \frac{\partial r}{\partial u} \big|_{u_k} \in \mathbb{R}^{N \times N}$ is the system Jacobian matrix, $\delta u_k$ is the search direction, and $\eta_k \in (0, 1]$ is a scalar step length determined via globalization strategies such as line-search or trust-region frameworks~\cite{nocedal1999numerical}.

When the initial guess $u_0$ is chosen within the local basin of attraction, the sequence exhibits a quadratic rate of asymptotic convergence~\cite{kantorovich1948functional}. However, for severely nonlinear problems, the radius of this local convergence basin shrinks considerably. If $u_0$ resides outside this critical boundary, the linear corrections become highly susceptible to stagnation, line-search step-size collapse, or catastrophic divergence. For such ill-conditioned cases, a continuation method is conventionally employed to stabilize convergence.

\subsubsection{Newton continuation method}\label{sec:ic-newton}
Suppose the nonlinear residual vector is parameterized by a scalar or vector $\lambda \in \mathbb{R}$, such that our target problem is given by:
\[
    r(u; \lambda^*) = 0,
\]
where $\lambda^*$ denotes a parameter value (e.g., a high Reynolds number in fluid mechanics or a large load increment in solid mechanics) that renders the system highly nonlinear and makes direct optimization via \cref{eq:newton} unstable. To circumvent this instability, the incremental continuation Newton method (IC-Newton) constructs a discrete parametric path $(\lambda_1, \lambda_2, \dots, \lambda_K)$ originating from a trivial or easily solvable state $\lambda_1$, and terminating at the target parameter $\lambda_K = \lambda^*$. The IC-Newton method then sequentially solves a sequence of problems $r(u; \lambda_{k+1}) = 0$ for $k = 1, \dots, K-1$, where the initial guess for the $(k+1)$-th step is taken as the converged solution of the $k$-th step, $u^*(\lambda_k)$. Provided that the partition of the parametric path is sufficiently fine, the initial guess for each subproblem remains within its respective 
local basin of attraction, ensuring robust convergence to the final target solution $u^*(\lambda^*)$~\cite{allgower2003introduction}.

\subsection{Neural operators}
In contrast to standard deep learning architectures that map between discrete, finite-dimensional Euclidean vectors, neural operators learn mappings between infinite-dimensional function spaces. Let $\mathcal{A}$ and $\mathcal{U}$ be bounded Banach spaces of functions defined over a compact spatial domain $\Omega \subset \mathbb{R}^d$. A neural operator $\mathcal{G}^\theta: \mathcal{A} \to \mathcal{U}$, parameterized by a vector of weights and biases $\theta \in \Theta$, aims to approximate an underlying nonlinear solution operator $\mathcal{G}$ that maps a parametric PDE input (e.g., varying boundary conditions, source terms, or spatial material profiles) to its corresponding solution field. 

The mathematical foundation of this paradigm rests upon the universal approximation theorems for operators~\cite{chen1995universal}, which guarantee that a neural operator with sufficiently wide or deep architectures can approximate any continuous operator on a compact set to arbitrary precision $\epsilon > 0$, such that $\sup_{a \in \mathcal{A}} \|\mathcal{G}(a) - \mathcal{G}^\theta(a)\|_{\mathcal{U}} < \epsilon$. Thus, given an input instance $a(\mathbf{x}) \in \mathcal{A}$, the neural operator infers the continuous solution field via a forward pass:
\[
    u(\mathbf{x}) \approx u^\theta(\mathbf{x}) = \left(\mathcal{G}^\theta a\right)(\mathbf{x}), \quad \mathbf{x} \in \Omega.
\]
Once optimized offline, evaluating $\mathcal{G}^\theta$ bypasses the traditional, computationally expensive assembly and inversion of high-dimensional algebraic matrices, yielding near-instantaneous functional inferences.

While various operator architectures have been proposed in literature~\cite{lu2021learning, li2021fourier, wu2024transolver}, this work primarily utilizes the Transolver~\cite{wu2024transolver} and the Fixed-Point Neural Operator (FPNO)~\cite{lee2025neural} frameworks.

\subsection{Neural operator-based hybrid Newton methods}
Hybrid computational frameworks seek to exploit the complementary advantages of data-driven surrogates and classical numerical analysis: the rapid, generalized approximation capabilities of neural operators and the exact, quadratically convergent refinement of classical Newton solvers. To effectively interface these distinct strategies, two prominent paradigms have emerged in the scientific machine learning literature.

\subsubsection*{Warm-started Newton method (WS-Newton)}
This strategy employs the neural operator strictly as an initial state generator for the classical Newton loop. Specifically, the neural operator prediction $u^\theta = \mathcal{G}^\theta(a)$ is assigned as the initial guess $u_0$ in \cref{eq:newton}, contrasting with the standard, uninformed choice of $u_0 = 0$. If $u^\theta$ successfully falls within the local basin of attraction of the target problem, the downstream classical Newton iteration immediately activates its quadratic convergence property, bypassing expensive incremental parametric continuation schedules. This convergence behavior is formalized below:

\begin{proposition}\label{prop:ws_newton}
Let $\mathcal{A}$ be the parameter space and $K \subset \mathcal{A}$ be a compact subset. 
Let $\mathcal{G}: K \to \mathcal{U}$ denote the exact, continuous solution operator mapping an input instance $a \in K$ to the true \textbf{discrete} solution $u$.
Assume that for each $a \in K$, the residual Jacobian $J(u; a)$ satisfies the following conditions within an open neighborhood $\mathcal{B}(\mathcal{G}(a), \rho_a) \subset \mathcal{U}$:
\begin{enumerate}
    \item The Jacobian operator is boundedly invertible at the exact solution:
    \[
        \|J(\mathcal{G}(a); a)^{-1}\| \le \alpha_a.
    \]
    \item The Jacobian operator is Lipschitz continuous with respect to the state variable:
    \[
        \|J(u; a) - J(v; a)\| \le \beta_a \|u - v\|, \quad \forall u, v \in \mathcal{B}(\mathcal{G}(a), \rho_a).
    \]
\end{enumerate}
Furthermore, assume that $\alpha_a$, $\beta_a$, and $\rho_a$ are strictly positive 
and depend continuously on $a \in K$. Then, for any parameter input $a \in K$, the WS-Newton 
method initialized by a sufficiently accurate neural operator exhibits local quadratic 
convergence to $\mathcal{G}(a)$.
\end{proposition}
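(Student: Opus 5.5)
The argument combines the classical local convergence theorem for Newton's method (Kantorovich / Newton--Mysovskikh type) with the uniform approximation property of neural operators, using compactness of $K$ to make the local radius uniform.

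\textbf{Step 1: local theorem at a fixed $a$.} Write $u^* = \mathcal{G}(a)$, $J = J(\cdot\,;a)$, $\alpha = \alpha_a$, $\beta = \beta_a$, $\rho = \rho_a$. Set
\[
r_a := \min\{\rho_a,\ 1/(2\alpha_a\beta_a)\}.
\]
Take any $u \in \mathcal{B}(u^*, r_a)$.

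First, $J(u)$ is invertible with a uniform bound. By condition~2,
\[
\|J(u^*)^{-1}(J(u)-J(u^*))\| \le \alpha\beta\|u-u^*\| < \tfrac12 .
\]
The Banach perturbation lemma then gives $J(u)$ invertible with $\|J(u)^{-1}\| \le 2\alpha$.

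Second, the Newton error satisfies a quadratic bound. Since $r(u^*;a)=0$,
\[
u - J(u)^{-1}r(u) - u^* = J(u)^{-1}\int_0^1 \bigl(J(u) - J(u^* + t(u-u^*))\bigr)(u-u^*)\,dt .
\]
Using condition~2 on the segment (contained in the ball, which is convex) gives
\[
\|u_{k+1}-u^*\| \le 2\alpha\cdot\tfrac{\beta}{2}\|u_k-u^*\|^2 = \alpha\beta\|u_k-u^*\|^2 .
\]
Because $\alpha\beta\|u_k-u^*\| < \tfrac12$, the iterates stay in the ball and contract. Induction therefore gives quadratic convergence for undamped steps $\eta_k=1$. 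This is the standard argument, and I would cite it, e.g.\ Kantorovich~\cite{kantorovich1948functional} or Nocedal--Wright~\cite{nocedal1999numerical}.

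\textbf{Step 2: uniformity over $K$.} The map $a \mapsto r_a$ is strictly positive and continuous, because $\alpha_a$, $\beta_a$, $\rho_a$ are. On the compact set $K$ it therefore attains a minimum
\[
r_* := \min_{a\in K} r_a > 0 .
\]

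\textbf{Step 3: invoking the approximation.} Apply the universal approximation property quoted earlier with $\epsilon = r_*$. It yields $\theta$ with
\[
\sup_{a\in K}\|\mathcal{G}(a)-\mathcal{G}^\theta(a)\| < r_* .
\]
This is how I would make ``sufficiently accurate'' precise. Hence $u_0 = \mathcal{G}^\theta(a) \in \mathcal{B}(\mathcal{G}(a), r_a)$ for every $a\in K$, and Step~1 applies with the same $\theta$ for all $a$.

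\textbf{Main obstacle.} I expect the delicate points to be bookkeeping rather than depth, and there are three.
\begin{itemize}
\item The norms must match. The Lipschitz and inverse bounds are stated in the norm of $\mathcal{U}$ on discrete vectors, and the approximation error must be measured in that same norm. In finite dimensions all norms are equivalent, so at worst a constant is absorbed into $r_*$.
\item The step length must be handled. Quadratic convergence is asserted for $\eta_k=1$, so I would either take full steps or argue that a line search accepts $\eta_k=1$ near $u^*$.
\item If continuity of $a\mapsto r_a$ were unavailable, I would instead use lower semicontinuity plus a finite subcover of $K$ to get $r_*>0$.
\end{itemize}
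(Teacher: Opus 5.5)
Your proposal is correct and follows the paper's proof essentially step for step. Both arguments use the local radius $\min\{\rho_a, 1/(2\alpha_a\beta_a)\}$, take a uniform positive lower bound over $K$ by continuity and compactness, and then apply the universal approximation property with that tolerance. The differences are that you derive the local estimate $\|u_{k+1}-\mathcal{G}(a)\| \le \alpha_a\beta_a\|u_k-\mathcal{G}(a)\|^2$ yourself (Banach perturbation lemma plus the integral remainder), where the paper simply cites Newton--Kantorovich, and that you explicitly flag two issues the paper's proof passes over: the norm mismatch (the paper measures the approximation error in $C^0$ but applies the radius in the $\mathcal{U}$-norm) and the need for full steps $\eta_k = 1$.
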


\begin{proof}
For each parameter $a \in K$, the standard Newton--Kantorovich theorem asserts that Newton's method converges quadratically to the unique local solution $\mathcal{G}(a)$ provided the initial guess $u_0$ lies within the basin of attraction $\mathcal{B}(\mathcal{G}(a), \delta_a)$, where the radius of convergence is given by:
\[
    \delta_a = \min \left\{ \rho_a, \frac{1}{2\alpha_a \beta_a} \right\} > 0.
\]
Consider the mapping $a \mapsto \delta_a$. Because $\alpha_a$, $\beta_a$, and $\rho_a$ are strictly positive and vary continuously with $a$ over the compact domain $K$, the radius function $\delta_a$ is continuous and bounded away from zero. By the Extreme Value Theorem, there exists a uniform lower bound:
\[
    \varepsilon := \min_{a \in K} \delta_a > 0.
\]
By the universal approximation capabilities of neural operators over compact domains~\cite{chen1995universal}, for any given $\epsilon > 0$, there exists a neural operator $\mathcal{G}^\theta$ such that the uniform approximation error satisfies:
\[
    \sup_{a \in K} \|\mathcal{G}^\theta(a) - \mathcal{G}(a)\|_{C^0} < \varepsilon.
\]
Consequently, for all $a \in K$, the initialization provided by the neural operator satisfies $\|\mathcal{G}^\theta(a) - \mathcal{G}(a)\| < \delta_a$. Thus, the warm-started initial state falls strictly within the local quadratic convergence zone of the classical Newton solver.
\end{proof}

\subsubsection*{Neural operator preconditioned Newton method (NP-Newton)}
Pioneered by Lee et al.~\cite{lee2025neural}, this strategy treats the neural operator as a dynamic, continuous nonlinear preconditioner rather than a static initial guess generator. The formal NP-Newton updates are expressed as:
\begin{align}
    v_k     &= \mathcal{G}_{\text{FPNO}}(u_k, r(u_k); \theta), \label{eq:np_newton_a} \\
    u_{k+1} &= v_k - \eta_k J(v_k)^{-1} r(v_k), \label{eq:np_newton_b}
\end{align}
where $v_k$ represents the intermediate state preconditioned dynamically by the fixed-point neural operator ($\mathcal{G}_{\text{FPNO}}$), and $u_{k+1}$ is the subsequent classical Newton correction computed utilizing the preconditioned variable.

Both hybrid formulations have demonstrated impressive empirical acceleration across several examples~\cite{taghikhani2025neural, lee2025neural}. However, their numerical robustness remains largely unmapped. A comprehensive understanding of the failure modes inherent to these emerging techniques is a vital prerequisite for engineering reliable hybrid numerical solvers. In the subsequent section, we evaluate the limits of both the WS-Newton and NP-Newton methods by exposing them to highly challenging, strongly nonlinear physical regimes.

\section{Failure modes in hybridization}\label{sec:examples}
In this section, we systematically dissect the primary numerical failure modes encountered when hybridizing neural operators with classical Newton's method. We begin by analyzing a low-dimensional, highly nonlinear algebraic system designed to isolate specific algorithmic vulnerabilities.

\subsection{A pedagogical toy example}\label{sec:pedagogical_example}
To isolate and systematically investigate the numerical pathologies and structural failure modes associated with hybrid Newton methods, we first examine a highly nonlinear, two-dimensional algebraic system of equations. We define the discrete system residual $r(\mathbf{x}) = 0$ as:
\[
    r(\mathbf{x}) = r(x, y) =
    \begin{bmatrix}
        (x - y^3 + 1)^5 - y^5 \\
        x + 2y - 3
    \end{bmatrix},
\]
which possesses an exact analytical solution at $\mathbf{x}^* = (1, 1)^\top$. Initializing the classical Newton solver from an unguided guess of $\mathbf{x}_0 = (2, 2)^\top$ presents a severe numerical challenge; owing to the sharp cubic and quintic nonlinearities embedded within the first residual component, the standard Newton method requires 13 iterations to satisfy standard convergence criteria. 

The left panel of \cref{fig:pedagogy} maps the corresponding residual norm landscape overlaid with this discrete Newton trajectory. To establish a quantitative diagnostic for this severe scaling imbalance, we evaluate the Euclidean $L^2$-norm condition number, $\kappa(J)$, of the system Jacobian matrix at each discrete step $k$. At the initial state $\mathbf{x}_0$, the condition number is excessively high, registering at $\kappa \approx 3.24 \times 10^4$. As the iterative trajectory approaches the root, the spectrum stabilizes, compressing $\kappa$ to a well-behaved magnitude of $O(10^1)$. Correlating these algebraic metrics with the geometric features shown in \cref{fig:pedagogy}, a large condition number manifests as a highly stretched, anisotropic, and narrow valley, whereas a low condition number indicates an isotropic, balanced residual landscape. Importantly, we emphasize that this spectral interpretation remains strictly local to the current state iterate.

\begin{figure}[htbp]
    \centering
    \includegraphics[width=1\linewidth]{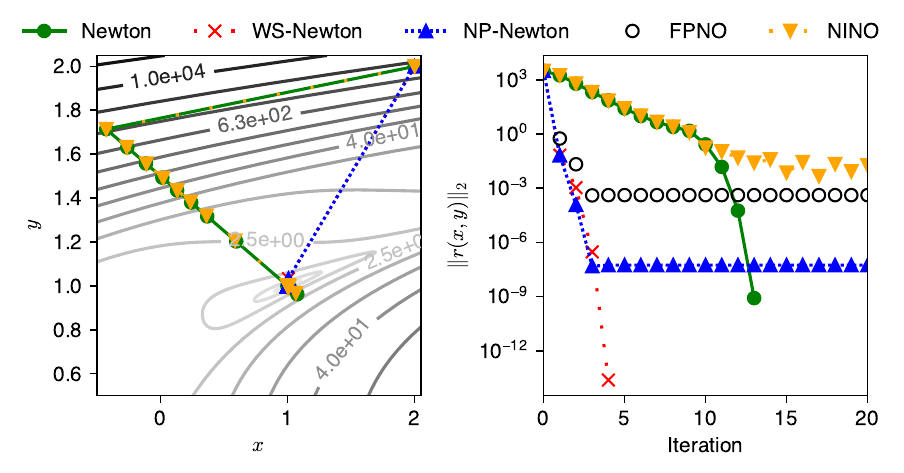}
    \caption{Numerical behavior and optimization trajectories for the pedagogical toy problem. Left: Two-dimensional residual norm landscape $\|r(x,y)\|_2$ overlaid with the iterative paths of the classical Newton, WS-Newton, and NP-Newton solvers. The classical Newton method exhibits initial stagnation due to severe nonlinearity along the $x$-coordinate direction, whereas the hybrid methods leverage operator predictions to place the initial state near the exact root $\mathbf{x}^*$. Right: Residual convergence histories across iterations. While the well-initialized WS-Newton framework triggers immediate asymptotic quadratic convergence, an undertrained neural operator causes the dynamic preconditioning loop in the NP-Newton method to exhibit severe numerical oscillations.}
    \label{fig:pedagogy}
\end{figure}

\subsubsection*{Dataset and neural operator training}
To train the operator, we collect the discrete state trajectories $(\mathbf{x}_k)_{k=0}^K$ generated during the successful convergence of the classical Newton method from $\mathbf{x}_0 = (2, 2)^\top$ to the exact root $\mathbf{x}_K = \mathbf{x}^* = (1, 1)^\top$. To fulfill the input requirements of the FPNO framework utilized in the NP-Newton method, we simultaneously record the corresponding algebraic residual trajectory $(r_k)_{k=0}^K$, where $r_k = r(\mathbf{x}_k)$. Both the backbone and scaling networks of the FPNO architecture are parameterized as Multi-Layer Perceptrons (MLPs) featuring a uniform hidden width of 10 channels, a depth of 3 layers, and the hyperbolic tangent activation function. The neural operator is trained with Adam optimizer~\cite{kingma2014adam} by minimizing the mean squared error (MSE) loss function:
\[
    \mathcal{L}(\theta) = \frac{1}{K}\sum_{k=0}^K \left\| \mathbf{x}^* - \mathcal{G}_{\text{FPNO}}(\mathbf{x}_k, r_k; \theta) \right\|_2^2.
\]
Owing to the low-dimensional nature of the system, this optimization phase requires negligible computational effort: within seconds on an Apple M3 Central Processing Unit (CPU) with JAX~\cite{jax2018github}.

\subsubsection*{Numerical result}
We first compare the hybrid methods with a fully data-driven Newton-informed neural operator (NINO)~\cite{hao2024newton} that approximates the Newton direction \(-J({\bf x})^{-1}r({\bf x})\) by a neural operator. As illustrated in the right panel of \Cref{fig:pedagogy}, the NINO fails to reduce the residual norm below \(10^{-3}\) and stagnates. By contrast, when the FPNO is fully optimized to a tight training tolerance (e.g., $\mathcal{L}(\theta) < 10^{-4}$), both the WS-Newton and NP-Newton methods successfully locate the exact analytical solution within a single iteration. As illustrated in the left panel of \cref{fig:pedagogy}, the optimized neural operator maps the unguided initial guess $\mathbf{x}_0$ directly into the immediate neighborhood of the root $\mathbf{x}^*$. By bypassing the ill-conditioned subregions governed by severe, unbalanced quintic nonlinearities, the hybrid Newton methods trigger immediate convergence.

An instructive numerical failure mode emerges, however, if the neural network training is intentionally terminated early, thereby introducing minor prediction errors into the operator's inference space. To simulate this scenario, optimization is halted when the empirical loss reaches a threshold of $\mathcal{L}(\theta) \approx 10^{-3}$. Under this regime of imperfect surrogate representation, the behavior of the hybrid Newton methods becomes different. As demonstrated by the residual histories in the right panel of \cref{fig:pedagogy}, the standard Newton and WS-Newton methods exhibit standard asymptotic quadratic convergence. Conversely, the alternating NP-Newton loop stagnates, near $\|r(\mathbf{x}_k)\|_2 \approx 10^{-8}$.

A detailed algorithmic inspection of the NP-Newton updates defined in \Cref{eq:np_newton_a,eq:np_newton_b}\footnote{For the two-dimensional pedagogical system, the state variable $u$ is substituted by $\mathbf{x}$.} clarifies the mechanics of this stagnation. Although the Newton step~\cref{eq:np_newton_b} effectively reduces the residual norm from $O(10^{-4})$ at the preconditioned state $v_k$ down to $O(10^{-8})$ at the next state iterate $\mathbf{x}_{k+1}$, the subsequent unregularized evaluation~\cref{eq:np_newton_a} of the imperfect operator, $v_{k+1} = \mathcal{G}_{\text{FPNO}}(\mathbf{x}_{k+1}, r_{k+1}; \theta)$, systematically corrupts the trajectory. This step reinjects a localized prediction error that inflates the intermediate residual norm back above $10^{-4}$. This oscillatory cycle prevents the hybrid solver from satisfying strict convergence tolerances, underscoring the critical necessity of a rich dataset that can cover Newton trajectories in various scenarios.

While this low-dimensional example provides valuable intuition regarding the oscillatory instabilities inherent to alternating preconditioned updates, the acceleration achieved by the WS-Newton method remains trivial because the underlying baseline system converges in fewer than 20 iterations. In the subsequent section, we evaluate these hybrid paradigms against a significantly more challenging benchmark wherein the classical Newton solver completely fails to converge.

\subsection{Lid-driven Cavity Flow}\label{sec:ldc}
To evaluate the performance of the proposed framework on a fluid dynamics benchmark, we consider the steady-state incompressible Navier--Stokes equations modeled via the stream function--vorticity formulation. The physical domain is defined as a unit square $\Omega = (0, 1)^2$ with a closed boundary $\Gamma = \partial\Omega$. The boundary conditions for the velocity field $u = (u_x, u_y)^\top$ prescribe a regularized top-driven lid, expressed as:
\[
    u_x = 
    \begin{cases} 
    1, & \text{on } \Gamma_{\text{top}} = \{(x, y) \in \bar{\Omega} : y = 1\}, \\
    0, & \text{on } \Gamma \setminus \Gamma_{\text{top}},
    \end{cases}
    \quad \text{and} \quad u_y = 0 \quad \text{on } \Gamma.
\]
To resolve the discontinuities at the two corners, we use \(1 - \tfrac{\cosh(r(x - 0.5))}{\cosh(0.5r)}\) with \(r = 50\) for smoothing. The governing coupled steady-state partial differential equations are formulated as:
\begin{align}
(u \cdot \nabla)\omega &= \frac{1}{\mathrm{Re}}\nabla^2\omega, \label{eq:ns_vorticity} \\
-\nabla^2 \psi &= \omega, \label{eq:ns_poisson} \\
u &= \left(\frac{\partial \psi}{\partial y}, -\frac{\partial \psi}{\partial x}\right)^\top, \label{eq:ns_velocity}
\end{align}
where $\mathrm{Re}$ denotes the dimensionless Reynolds number and the scalar vorticity is related to the velocity field via $\omega = \nabla \times u = \frac{\partial u_y}{\partial x} -\frac{\partial u_x}{\partial y}$. This system constitutes the classic lid-driven cavity flow benchmark problem~\cite{ghia1982high}.

As the Reynolds number increases, the diffusive term in \cref{eq:ns_vorticity} becomes progressively dominated by the nonlinear convective transport term. From a spectral perspective, this convective dominance makes the vorticity field $\omega$ decay slowly, implying the generation of high-frequency spatial modes that complicate the overall solution profile. Physically, this manifests as the formation of thin, highly sharp boundary layers along the solid walls and the emergence of eddies at the corners. To accurately resolve these high-gradient boundary layers, the spatial domain is discretized using a high-order spectral element method~\cite{karniadakis2005spectral} with the scheme described in \cite{liu2001simple}. We employ an $8^{\text{th}}$-order nodal expansion over a structurally graded algebraic mesh comprising $51 \times 51$ nodes clustered near the boundaries. \Cref{fig:ldc-mesh} illustrates the non-uniform discretization alongside the steady-state stream function and vorticity fields computed at $\mathrm{Re} = 7500$.

\begin{figure}[ht]
    \centering
    \includegraphics[width=1\linewidth]{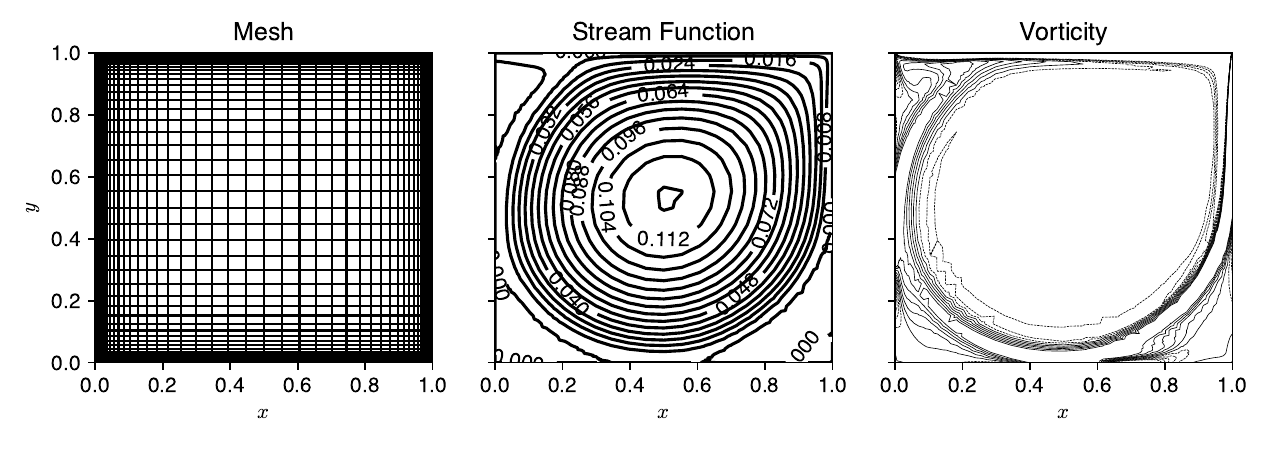}
    \caption{Numerical discretization and steady-state solution profiles for the lid-driven 
    cavity flow at $\mathrm{Re}=7500$. Left: Structurally graded mesh topology designed to resolve boundary layers. Center: Two-dimensional contour plot of the computed stream function $\psi$. Right: Two-dimensional contour plot of the resolved vorticity field $\omega$. 
    The underlying nonlinear system was converged to $\|r\|_2 \approx 1.94 \times 10^{-14}$.}
    \label{fig:ldc-mesh}
\end{figure}

\subsubsection*{Dataset and neural operator training}
For dataset generation, we sample $M = 200$ dimensionless Reynolds numbers from a uniform distribution, $\mathrm{Re} \sim \mathcal{U}(10^3, 10^4)$. To overcome the severe convective nonlinearities encountered at high Reynolds numbers, the IC-Newton method (\cref{sec:ic-newton}) is employed using $20$ uniform loading steps. Specifically, the numerical solver sequentially steps the system from an initial state of $\mathrm{Re}/20$ up to the target value $\mathrm{Re}$, recording the intermediate stream functions $\psi_k$ and algebraic residual vectors $r_k = r(\psi_k, \omega_k)$ at each step.

A FPNO ($\mathcal{G}_{\text{FPNO}}$), configured with a Transolver++~\cite{luo2025transolver++} backbone and scaling network architecture, is trained to predict the steady-state stream function field $\psi$. The neural operator is optimized via a component-wise relative $L^2$ loss function formulated as:
\[
    \mathcal{L}(\theta) = \frac{1}{M}\sum_{m=1}^M \left( \frac{1}{K_m}\sum_{k=0}^{K_m} \left\| \frac{\psi^{*, (m)} - \mathcal{G}_{\text{FPNO}}(\psi_k^{(m)}, r_k^{(m)}; \theta)}{|\psi^{*, (m)}| + \varepsilon} \right\|_2^2 \right),
\]
where the superscript $(m)$ denotes the $m$-th randomly sampled Reynolds number instance, $\psi^{*, (m)}$ is the corresponding ground-truth high-fidelity stream function solution, and $\varepsilon = 10^{-2}$ is a small regularization parameter preventing division by zero. The associated velocity field $u$ and vorticity field $\omega$ are subsequently reconstructed from the predicted stream function $\psi$ via exact numerical differentiation utilizing \cref{eq:ns_velocity}.

The network optimization is executed using the AdamW~\cite{kingma2014adam, loshchilov2018decoupled} optimizer with a batch size of 100, a learning rate of $10^{-4}$, and a weight decay of $5 \times 10^{-4}$. Training is terminated after achieving a mean relative $L^2$ error of $5 \times 10^{-3}$ on the unseen test dataset. The total training phase required approximately 4.19 hours of computational time on a single NVIDIA H100 Graphics Processing Unit (GPU).

\subsubsection*{Numerical result at a high Reynolds number}
The performance of the hybrid schemes is evaluated at a target test Reynolds number of $\mathrm{Re} = 7500$. Although the FPNO is trained across the broader parametric range $\mathrm{Re} \sim \mathcal{U}(10^3, 10^4)$, evaluating steady-state solvers beyond $\mathrm{Re} \ge 8000$ introduces physical anomalies; at these regimes, the flow undergoes a transition to turbulence, meaning a steady-state solution lacks physical realizability~\cite{shankar2000fluid, erturk2009discussions}. The left panel of \Cref{fig:ldc-convergence} details the relative residual convergence profiles for both the WS-Newton and NP-Newton formulations.

\begin{figure}[ht]
    \centering
    \includegraphics[width=1\linewidth]{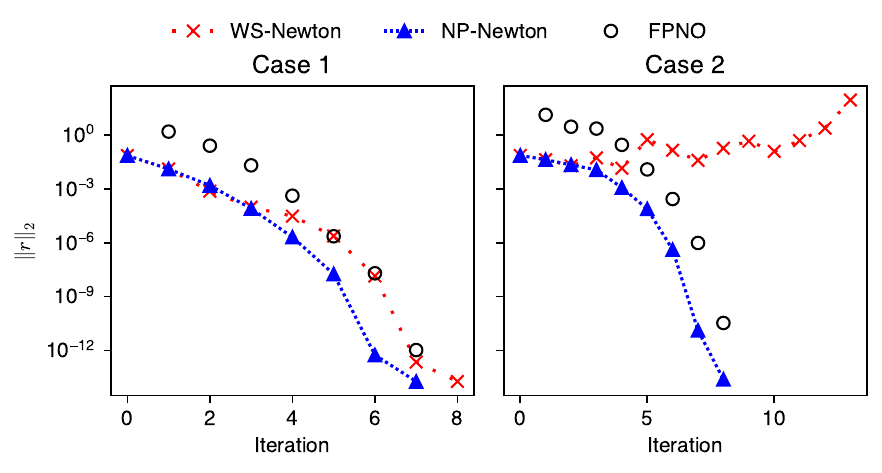}
    \caption{Residual norm convergence profiles for hybrid and classical numerical solvers 
    under varying training limits and flow regimes. Left (Case 1): strict error tolerance 
    $\mathcal{L}_{\text{tol}} = 5 \times 10^{-3}$ at $\mathrm{Re}=7500$. Right (Case 2): relaxed 
    error tolerance $\mathcal{L}_{\text{tol}} = 5 \times 10^{-2}$ at $\mathrm{Re}=3000$.}
    \label{fig:ldc-convergence}
\end{figure}

\Cref{tab:cavity_convergence} provides a detailed quantitative comparison of the total cumulative iteration counts and the corresponding wall-clock times required to satisfy the convergence criteria. When benchmarked against the IC-Newton method requiring 65 cumulative iterations across 15 loading steps, both neural operator-based hybrid frameworks achieve an approximate $10\times$ computational speedup. Notably, despite requiring one additional iteration, the WS-Newton method yields the lowest total wall-clock time ($83.8$~seconds). This efficiency stems from bypassing the repeating intermediate inference evaluations of the neural operator that are embedded within the active iterative loop of the NP-Newton scheme.

\begin{table}[t]
\centering
\caption{Computational performance comparison for the lid-driven cavity flow at $\mathrm{Re} = 7500$. A direct linear solver is used on a single core.}
\label{tab:cavity_convergence}
\resizebox{\columnwidth}{!}{
\begin{tabular}{lcccc}
\hline
\textbf{Method} & $\|r\|_2$ & \textbf{Iterations} & \textbf{Wall Time} (s) & \textbf{Speedup} \\ \hline
IC-Newton (15 steps) & $1.11 \times 10^{-12}$ & 65 & 920.0 & $1.00$ (baseline) \\
NP-Newton            & $1.86 \times 10^{-14}$ & 7  & 97.9  & $9.40\times$      \\
WS-Newton            & $1.85 \times 10^{-14}$ & 8  & 83.8  & $11.0\times$     \\ \hline
\end{tabular}
}
\end{table}

\subsubsection*{Robustness under imperfect surrogates}
To systematically investigate the limits of algorithmic robustness, we relax the neural operator's training termination criterion from a strict learning tolerance ($\mathcal{L}_{\text{tol}} = 5 \times 10^{-3}$) to a relaxed threshold ($\mathcal{L}_{\text{tol}} = 5 \times 10^{-2}$). Concurrently, the evaluation suite is expanded across a broader range of flows: $\mathrm{Re} \in \{1500, 3000, 4500, 6000, 7500\}$. \Cref{tab:cavity-robust} summarizes the resulting convergence characteristics.\footnote{For a severely relaxed learning tolerance of $\mathcal{L}_{\text{tol}} = 1 \times 10^{-1}$, both the WS-Newton and NP-Newton formulations diverged immediately across all tested regimes.}

Importantly, the performance under an undertrained neural operator contrasts with the low-dimensional findings in \cref{sec:pedagogical_example}. While the static WS-Newton framework fails to converge for all regimes where $\mathrm{Re} \ge 3000$, the dynamically preconditioned NP-Newton loop successfully resolves the flow fields up to $\mathrm{Re} = 4500$. This phenomenon indicates that the NP-Newton formulation exhibits greater algorithmic resilience when trained with sufficient data that includes various Newton trajectories. This robustness can be attributed to FPNO's continuous structural, state-dependent corrections at every iterative step rather than relying on a single, static initialization. 

The right panel of \cref{fig:ldc-convergence} tracks the step-by-step residual histories for both hybrid approaches at $\mathrm{Re} = 3000$. Circles indicate neural operator outputs. Cross and triangle markers denote Newton updates. While the WS-Newton path experiences divergence starting at the fifth iteration, the FPNO correction in the NP-Newton loop prevents the divergence, successfully driving the solution below the predefined convergence tolerance.

\begin{table}[t]
\centering
\caption{Algorithmic robustness for the lid-driven cavity flow under varying training tolerances ($\mathcal{L}_{\text{tol}}$) and Reynolds numbers ($\mathrm{Re}$). Data entries correspond to iteration counts formatted as (WS-Newton / NP-Newton) required to satisfy a strict relative residual convergence threshold of $\|r\|_2 / \|r_0\|_2 < 10^{-9}$. The label ``--'' designates failure to converge.}
\label{tab:cavity-robust}
\begin{tabular}{l|ccccc}
\toprule
\multirow{2}{*}{$\mathcal{L}_{\text{tol}}$} & \multicolumn{5}{c}{$\mathrm{Re}$}\\
& 1500 & 3000 & 4500 & 6000 & 7500 \\\midrule

$5 \times 10^{-2}$ & 7 / 7 & -- / 8 & -- / 8 & -- / --& -- / -- \\
$1 \times 10^{-2}$ & 5 / 6 & 5 / 7   & 6 / 7   & 6 / 8     & 6 / 8     \\
$5 \times 10^{-3}$ & 5 / 5 & 5 / 5   & 5 / 6   & 6 / 6     & 8 / 7     \\ \bottomrule
\end{tabular}
\end{table}

\Cref{tab:cavity-robust} presents the overall result, confirming that while both hybrid approaches perform well when paired with a highly accurate surrogate ($\mathcal{L}_{\text{tol}} = 5 \times 10^{-3}$), their behaviors diverge as surrogate fidelity degrades. Notably, the WS-Newton method typically required fewer iterations to converge, if it started to converge. The only exception was \(\mathcal{L}_\text{tol} = 5\times10^{-3}\) and \(\mathrm{Re} = 7500\) case.

In the subsequent section, we move beyond these canonical scenarios to expose a far more severe class of numerical failure modes: scenarios where hybridization breaks down even when paired with a fully optimized, highly accurate neural operator.

\subsection{2D hyperelasticity}\label{sec:2dhel}
In this section, we revisit the 2D hyperelasticity problem considered in \cite[Section 3.2]{lee2025neural}. Let $\Omega \subset [0, 1]^2$ denote the reference domain in a unit square. \Cref{fig:hel_mesh} visualizes the domain. The total potential energy functional $\Pi(u)$ in terms of the displacement field $u$ is given by:
\begin{equation}\label{eq:hel}
    \Pi(u) = \int_\Omega \left[ \frac{\mu}{2}(I_c - 3) - \mu \ln(\det \mathbf{F}) + \frac{\lambda_{\text{Lam\'e}}}{2}(\det \mathbf{F} - 1)^2 \right] d\mathbf{x},
\end{equation}
where $\mathbf{F} = \mathbf{I} + \nabla u$ represents the deformation gradient tensor, $\mathbf{C} = \mathbf{F}^\top\mathbf{F}$ is the right Cauchy--Green deformation tensor, and $I_c = \mathrm{tr}(\mathbf{C})$ is its first invariant. The material parameters are defined via Lam\'e constants calculated from Young's modulus $E = 1.0$ and Poisson's ratio $\nu = 0.49$:
\begin{equation}\label{eq:lame}
    \mu = \frac{E}{2(1+\nu)}, \qquad \lambda_{\text{Lam\'e}} = \frac{E\nu}{(1+\nu)(1-2\nu)}.
\end{equation}
Note that as \(\nu\) approaches to \(0.5\), \(\lambda_{\text{Lam\'e}}\) approaches to \(\infty\), enforcing the incompressibility condition \(\det \mathbf{F} = 1\) strictly. Thus, $\nu = 0.49$ places the system in a nearly incompressible limit, prone to extreme numerical instability.

\begin{figure}[t]
    \centering
    \includegraphics[width=0.75\linewidth]{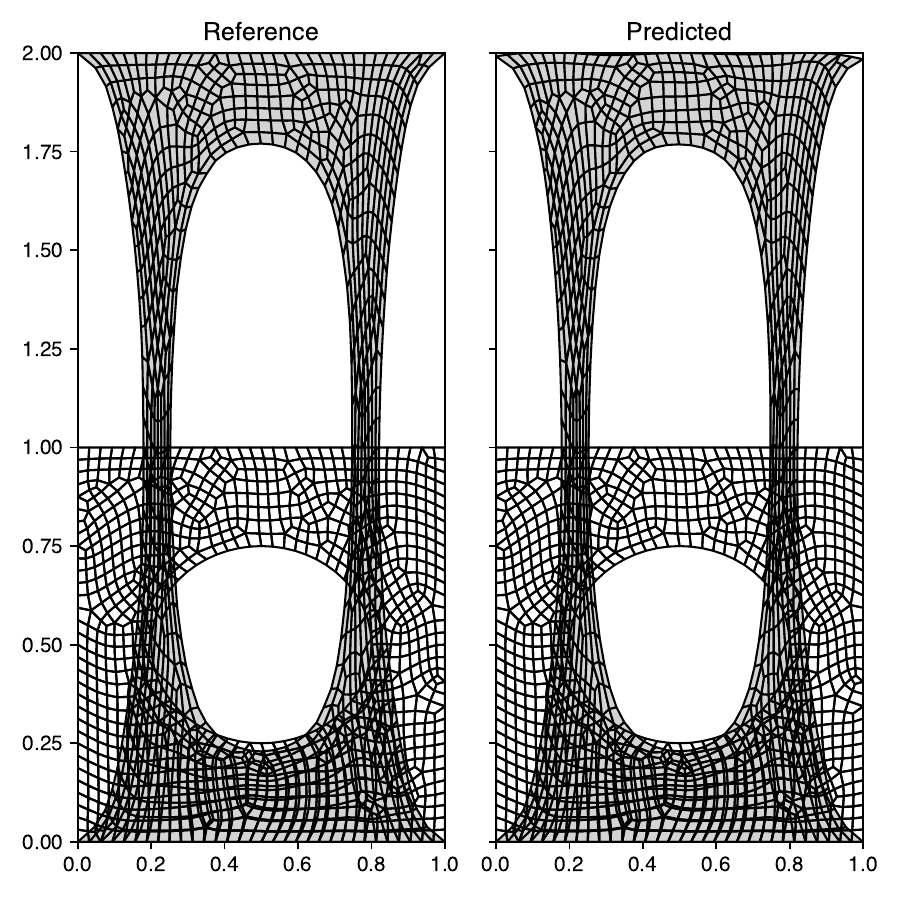}
    \caption{Overlay of the original domain and deformed domain for \(\tau = 1\). Left: reference deformation found by the reference solver. Right: deformation predicted by a neural operator. Both reference deformation and predicted deformation look similar, but there are slight gaps in the top corners.}
    \label{fig:hel_mesh}
\end{figure}

\Cref{eq:hel} is discretized using standard $P^1$ Lagrange finite elements over a structured tetrahedral mesh $\mathcal{T}_h$, defining the discrete space:
\[
    \mathcal{V}_h = \{u \in [H^1(\Omega)]^2 : u|_T \in [P^1(T)]^2 \quad \forall T \in \mathcal{T}_h\}.
\]
The boundary conditions are split into a clamped bottom domain $\Gamma_1 = \{(x,y) \in \Omega : y = 0\}$ and a displaced top surface $\Gamma_2 = \{(x,y) \in \Omega : y = 1\}$ such that:
\[
    u = (0, 0)^\top \quad \text{on } \Gamma_1, \qquad u = (0, \tau)^\top \quad \text{on } \Gamma_2.
\]

We seek to find the deformed state by
\[
    u^* = \arg \min_{\bf u} \Pi(u).
\]
We solve the minimization problem with the first-order optimality condition, i.e., finding the solution to
\[
    r(u) := \nabla_u \Pi(u) = 0.
\]
We evaluate the performance of our framework under a computationally demanding many-query setup, where the top boundary displacement $\tau$ is varied continuously within the interval $[0, 2]$. Large displacement fields ($\tau \ge 1.0$) introduce severe geometric and material nonlinearities, making standard iterative methods highly sensitive to initial conditions.

\subsubsection*{Numerical result with FPNO}
Following the framework established by Lee et al.~\cite{lee2025neural}, a training dataset was originally constructed by sampling initial state guesses from Gaussian random fields and parametric boundary conditions from a uniform distribution, $\tau \sim \mathcal{U}(0, 2)$. Optimization trajectories were subsequently collected from a baseline Newton solver whose step size $\eta_k$ is dynamically found at each iteration by a trust-region method (Newton-TR). The FPNO is trained to map these current iterates and their corresponding residuals directly to their final converged solutions, aiming to bypass ill-conditioned regions dominated by highly unbalanced nonlinearities. When evaluated at a specific loading parameter of $\tau = 1$, the dynamically preconditioned NP-Newton loop reportedly yields a $6.98\times$ computational acceleration over the standalone Newton-TR baseline~\cite[Table 3]{lee2025neural}, utilizing the MUMPS sparse direct LU solver framework.

While this structural acceleration is conceptually compelling, it remains critical to identify precisely whether the continuous online corrections or merely the initial warm start drives the performance within the NP-Newton loop. To isolate these mechanisms, we conduct an algorithmic ablation study wherein the FPNO preconditioning is systematically deactivated after the initial iteration ($k > 0$), effectively reducing the framework to a static WS-Newton method. \Cref{tab:sqh_robust} details the results of this robustness test evaluated across an escalated loading regime: $\tau \in \{1, 1.25, 1.5, 1.6, 1.75, 1.8, 2\}$. 

Our ablation study reveals two critical numerical phenomena. First, in the rare instances where the WS-Newton method successfully converges, it requires fewer total iterations than the fully preconditioned NP-Newton scheme, bypassing the minor overhead of iterative surrogate evaluations. Second, and more importantly, both hybrid approaches exhibit a profound lack of robustness compared to the Newton-TR baseline as $\tau$ increases. Despite being explicitly trained on trajectories spanning $\tau \sim \mathcal{U}(0, 2)$, both the WS-Newton and NP-Newton formulations suffer from divergence across the majority of the higher loading parameters. This vulnerability highlights a fundamental generalization collapse when the operator encounters highly strained states.

\begin{table}[t]
\centering
\caption{Robustness and numerical result for 2D hyperelasticity with FPNO. Values denote iterations required to achieve a relative residual norm below $10^{-9}$. The symbol ``--'' indicates solver divergence or stagnation.}
\label{tab:sqh_robust}
\begin{tabular}{l|ccccccc}
\toprule
\multirow{2}{*}{\textbf{Method}} & \multicolumn{7}{c}{$\tau$}\\
& 1.00 & 1.25 & 1.50 & 1.60 & 1.75 & 1.80 & 2.00 \\\midrule
Newton & 107 & 133 & 159 & 169 & 185 & 190 & 211 \\
WS-Newton & 5   & -- & 6   & -- & -- & -- & -- \\
NP-Newton & 6   & 7   & 9   & -- & 10  & -- & -- \\ \bottomrule
\end{tabular}
\end{table}

\subsubsection*{Scalability limit of the NP-Newton method}
The total number of DOFs for this low-dimensional validation case is only 2,059. Within this small-scale regime, the baseline Newton-TR solver requires a few seconds on a single CPU, even for the most severe loading state of $\tau = 2$. When factoring in the immense offline computational cost required for trajectory data collection and subsequent operator training, neural operator-based hybrid solvers become practically uncompetitive unless they can be scaled up to multi-million DOF engineering systems. 

However, scaling the NP-Newton or FPNO frameworks to large-scale 3D regimes exposes a severe data-storage and scalability bottleneck. As shown in \Cref{tab:sqh_robust}, a single unguided Newton trajectory requires approximately $100$ discrete steps on average to reach convergence. Sampling a modest group of $200$ parameter variations requires storing approximately $20,000$ distinct state-residual-solution triplets $(u_k, r_k, u^*)$ to assemble the training dataset. For large-scale 3D models where a single state vector demands much bigger memory, storing full iterative trajectories becomes entirely prohibitive. Consequently, the NP-Newton method that depends fundamentally on full trajectory histories suffers from severe scalability limits. To achieve true computational scalability, we must pivot exclusively to a warm-started framework driven by a parameter-to-state operator architecture trained on a direct static mapping $\tau \mapsto u^*$, which requires only a single converged snapshot per parameter sample.

Furthermore, moving to large-scale simulations imposes a strict constraint: iterative linear solvers become favorable due to the prohibitive memory footprint of sparse direct solvers. While finite element discretizations naturally preserve Jacobian sparsity, direct factorization scales poorly with domain size. Recognizing this imperative, we explore the employment of iterative linear solvers and study associated pathologies in the WS-Newton method.

\subsubsection{Towards a scalable WS-Newton method}\label{sec:spectral_analysis}
To establish a computationally scalable framework for the WS-Newton solver, we pivot from an iterative framework to a direct parameter-to-state mapping, approximating the static solution operator $\tau \mapsto u^*$. We utilize the Transolver architecture~\cite{wu2024transolver} rather than an FPNO, thereby completely bypassing the requirement for storing full, multi-step Newton trajectories. Specifically, we adopt the memory-efficient Transolver++ variant~\cite{luo2025transolver++}, wherein we substitute the stochastic Gumbel-Softmax trick with a standard, deterministic Softmax function to ensure permutation equivariance.

To construct the training dataset, we sample $M = 300$ parametric loading states from a uniform distribution, $\tau \sim \mathcal{U}(0,2)$, and record only the final, converged high-fidelity snapshots to assemble the dataset $\{(\tau^{(m)}, u^{*,(m)})\}_{m=1}^{300}$. By shifting from full trajectory sequences to static end-state snapshots, the data-storage footprint is successfully reduced by two orders of magnitude. The operator is subsequently pre-trained on this dataset to a relative $L^2$ learning tolerance of $\mathcal{L}_{\text{tol}} = 3 \times 10^{-3}$.

\begin{table}[t]
\centering
\caption{Robustness result and step counts for the WS-Newton method under pre-trained and fine-tuned regimes. Convergence is defined by achieving a relative residual tolerance of $\|r\|_2 / \|r_0\|_2 < 10^{-9}$ under trust-region method. The symbol ``--'' denotes solver divergence or stagnation.}
\label{tab:sqh_ft_robust}
\resizebox{\columnwidth}{!}{
\begin{tabular}{cc|ccccccc}
\toprule
\multirow{2}{*}{\bf neural operator}&\multirow{2}{*}{\bf linear solver}& \multicolumn{7}{c}{\(\tau\)}\\
& & 1.00 & 1.25 & 1.50 & 1.60 & 1.75 & 1.80 & 2.00 \\\midrule
\multirow{3}{*}{pretrained}&CG &-- &-- &-- &-- &-- &-- &--\\
&FGMRES &-- &8 &8 &-- &-- &-- &5\\
&LU & 8 &-- &-- &-- &-- &-- &14\\\midrule
\multirow{3}{*}{finetuned}&CG & 4 &5 &5 &5 &6 &6 &--\\
&FGMRES&4&4&5&5&6&6&6\\
&LU &5&5&6&6&6&6&7\\\bottomrule
\end{tabular}
}
\end{table}

The first three rows of Table~\ref{tab:sqh_ft_robust} outline the convergence performance of the pre-trained operator when used as an initializer. Because this neural operator approximates the direct map $\tau \mapsto u^*$, it is evaluated exclusively within the static WS-Newton framework. To rigorously evaluate scalability constraints, we combine the framework with both Krylov subspace iterative linear solvers -- specifically, the Conjugate Gradient (CG) method and Flexible GMRES (FGMRES) -- and a baseline direct LU factorization method, all backed by a trust-region method. 

The trained neural operator exhibits a poor robustness profile similar to the FPNO configurations, converging successfully in only a few cases. The pre-trained neural operator triggers divergence across all test cases when paired with the CG linear solver. To uncover the structural drivers of these convergence behaviors, we isolate and inspect the spectral properties of the initial system Jacobian matrix under three distinct initialization strategies: a default zero initial guess ($u_0 = \mathbf{0}$), a baseline state tracking the solution from the previous parameter step ($u_{\tau=0.9}$), and the raw inference field provided by the data-driven pre-trained operator. \Cref{fig:sqh_eigenvalues} plots the 20 smallest eigenvalues of the discrete system Jacobian evaluated at these states for $\tau = 1$. 

For the reference state $u_{\tau=0.9}$, the minimum eigenvalue $\lambda_{\min}$ remains strictly positive, satisfying classical local convex minimization requirements. Conversely, the Jacobian evaluated at the pre-trained operator's prediction yields 9 distinct negative eigenvalues. This indicates that, while the continuous variational formulation guarantees a symmetric and positive-definite (SPD) system Jacobian near the solution, the unregularized, raw neural network predictions inject localized errors. The right panel of \Cref{fig:hel_mesh} presents the raw neural operator prediction for \(\tau = 1\) -- such localized errors are difficult to capture by the eyes. These localized defects distort the discrete Jacobian, destroying its positive definiteness and causing the underlying CG algorithm to stall or diverge.

\begin{figure}[htbp]
    \centering
    \includegraphics[width=0.7\linewidth]{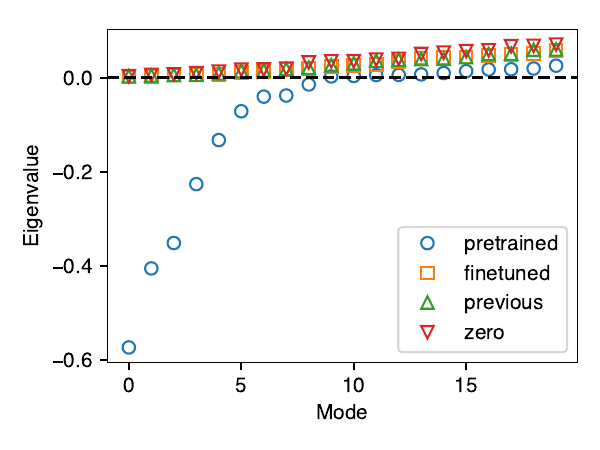}
    \caption{The 20 smallest eigenvalues of the initial system Jacobian matrices evaluated at $\tau = 1$ across different initialization configurations: the baseline zero guess, the previous step reference ($\tau = 0.9$), the unregularized pre-trained neural operator, and the physics-regularized fine-tuned operator.}
    \label{fig:sqh_eigenvalues}
\end{figure}

To connect this pathology to the underlying continuum mechanics, we evaluate the local volume change, $\det\mathbf{F}$. \Cref{fig:sqh_J} visualizes the spatial distribution and statistical spread of $\det\mathbf{F}$ at $\tau = 1$. As presented in the first and third panels, the $\det\mathbf{F}$ values of the pre-trained operator field deviate significantly from the reference solution. The histogram in the fourth panel highlights that the unregularized neural operator yields a broad dispersion of local volume changes. While the maximum values coincide near $1.20$, the minimum value is approximately $0.40$ for the pre-trained neural operator compared to a value of $0.80$ in the reference solution. 

We hypothesize that these severe localized fluctuations in unpenalized volume change can destroy the global spectrum of the Jacobian. To regularize this defect, we fine-tuned the pre-trained neural operator with a physics-informed loss targeting the minimization of the incompressibility constraint penalty:
\[
    \mathcal{L}_{\text{PI}} = \frac{\lambda_{\text{Lam\'{e}}}}{2}\int_\Omega \left( \det\mathbf{F}(\mathbf{x}) - 1 \right)^2 \, \mathrm{d}\mathbf{x}.
\]
In particular, the loss derivatives with respect to the state vector, \(\partial \mathcal{L}_{\text{PI}} / \partial u^\theta\), are computed via efficient finite-element routines executed on the CPU. These terms are subsequently passed to the GPU to complete backpropagation through the network parameters via automatic differentiation:
\[
    \frac{\partial \mathcal{L}}{\partial \theta} = \underbrace{\frac{\partial \mathcal{L}}{\partial u^\theta}^\top}_{\text{FEM (CPU)}} \,\, \underbrace{\frac{\partial u^\theta}{\partial \theta}}_{\text{AD (GPU)}}.
\]
The fine-tuning phase is executed over 200 epochs using 30 equispaced values of $\tau \in [0, 2]$, utilizing the AdamW optimizer with a learning rate of $5 \times 10^{-6}$. It requires negligible computational overhead, concluding in less than one minute on a single CPU.

\begin{figure}[ht]
    \centering
    \includegraphics[width=1\linewidth]{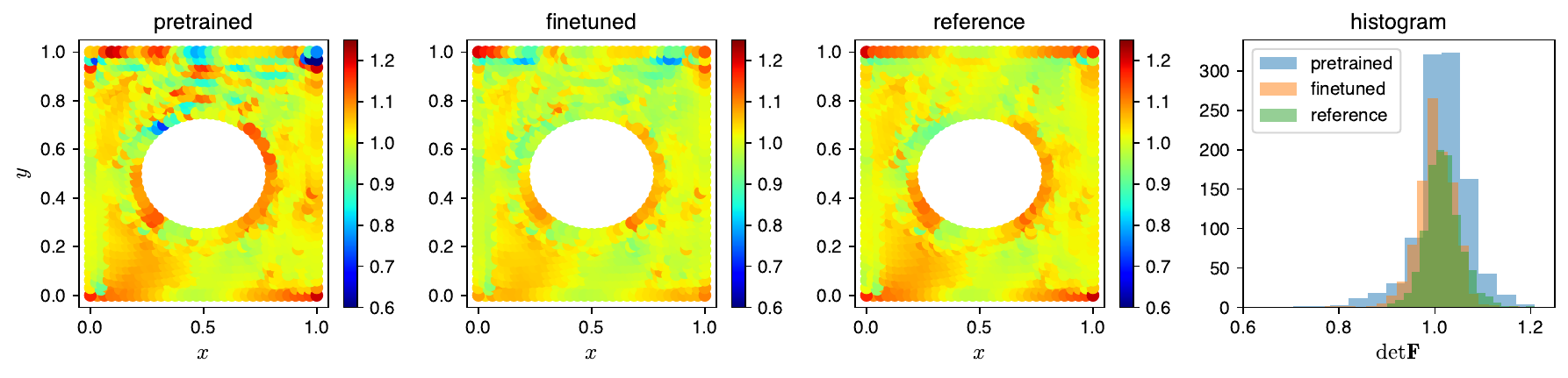}
    \caption{Spatial distributions and statistical frequencies of the deformation gradient determinant $\det\mathbf{F}$ at $\tau = 1$. Left to right: unregularized pre-trained operator prediction, regularized fine-tuned operator prediction, reference solution, and histograms across configurations.}
    \label{fig:sqh_J}
\end{figure}

The orange square markers in \cref{fig:sqh_eigenvalues} show the spectral recovery achieved following this physics-informed regularizing step. Every single eigenvalue shifts into the positive half-plane, establishing a strictly positive-definite system Jacobian. This spectral shift is explained by the \(\det\mathbf{F}\) values visualized in the second panel of \cref{fig:sqh_J}; the field from the fine-tuned neural operator is similar to that of the reference solution.

As a direct consequence of this restored spectrum, the WS-Newton framework paired with the CG iterative linear solver achieves complete convergence across all loading parameters, with the sole exception of the extreme $\tau = 2$ case where the system requires a flexible CG variant. Importantly, as outlined in the lower blocks of \Cref{tab:sqh_ft_robust}, this data-free regularized WS-Newton approach significantly outperforms the trajectory-trained NP-Newton paradigm, achieving both superior iteration reduction and an eliminated storage footprint.

\subsection{3D hyperelasticity}\label{sec:3dhel}
In this section, we scale up the hyperelasticity problem~\cref{eq:hel,eq:lame} in 3D, consisting of millions of DOFs. To isolate pathologies that arise in large-scale problems, we consider a simple geometry of the unit cube $\Omega = [0, 1]^3$ as the computational domain. A clamped zero-displacement boundary condition is imposed on the bottom surface, defined as $\{(x, y, 0) : (x, y) \in [0, 1]^2\}$, while a prescribed displacement boundary condition $u = (0, 0, \tau)$ is enforced on the top surface, $\{(x, y, 1) : (x, y) \in [0, 1]^2\}$. The boundary displacement parameter $\tau$ is varied continuously from $0$ to $2$.

\subsubsection*{Dataset and neural operator training}
To evaluate the scalability and performance of the proposed framework under a large-scale 3D regime, we discretize the physical domain with $N^3 = 64^3$ elements, yielding approximately $0.8\times 10^6$ DOFs. Within this high-dimensional setting, a trajectory-based dataset required to train an FPNO would exceed $600\text{~GB}$ of raw state memory. Furthermore, memory allocation constraints inherent to the Transolver architecture restrict the training batch sizes, rendering the computational cost of an iterative fixed-point optimization phase prohibitively expensive. Consequently, we restrict our large-scale evaluation exclusively to the WS-Newton strategy. 

To construct the parameter-to-state training dataset, we sample $M = 100$ discrete loading 
parameters from a uniform distribution, $\tau \sim \mathcal{U}[0, 2]$. The corresponding reference solutions $u^*(\tau)$ are computed utilizing the IC-Newton solver with 10 steps. This offline data generation phase successfully converges all target residuals below a threshold of $10^{-7}$, requiring $4.22$~hours of wall-clock time distributed across $64$ parallel CPU cores. The neural operator is subsequently optimized to approximate the direct static solution map $\tau \mapsto u^*(\tau)$ with the MSE loss, finally achieving a relative $L^2$ test error of $10^{-3}$ after $42.4$~hours of training on a single NVIDIA H100 GPU.

\subsubsection*{Numerical result}
The performance of the WS-Newton scheme is first tested at an intermediate displacement of $\tau = 0.8$. At this discretization scale, direct solvers exceed available hardware memory allocations (1TB). To bypass this limitation, we employ the CG linear solver preconditioned via an algebraic multigrid (AMG) scheme to execute fast, memory-efficient Newton steps. Under this arrangement, the numerical solver successfully refines the operator's initial inference field, satisfying the strict relative stopping criterion of $\|r_k\|_2 / \|r_0\|_2 \le 10^{-8}$ within 4 iterations. However, comprehensive evaluation across a broader, highly strained spectrum of boundary displacements ($\tau \in \{0.2, 0.4, \dots, 2\}$) demonstrates that a purely data-driven initialization lacks the requisite robustness to solve a large-scale energy minimization problem.

\begin{table}[t]
\centering
\small
\caption{Newton iteration counts for the large-scale robustness evaluation on the $64^3$ grid ($0.8\times 10^6$ DOFs) across varying boundary displacements $\tau$. Convergence is defined by achieving a relative residual norm tolerance of $\|r_k\|_2 /\|r_0\|_2 \le 10^{-8}$ under AMG-preconditioned CG linear solvers. The label ``--'' designates solver divergence or stagnation prior to meeting the target tolerance.}
\label{tab:cube-64-robustness}
\begin{tabular}{c|cccc}
\toprule
$\tau$ & \textbf{PT} & \textbf{PT + NGMRES} & \textbf{FT} & \textbf{FT + NGMRES} \\
\midrule
0.2 & 4 & 4& 5& 4 \\
0.4 & -- & -- & 5& 4 \\
0.6 & -- & 5& 5& 5 \\
0.8 & 4& 4& 6& 5 \\
1.0 & -- & 5& 6& 6 \\
1.2 & -- & -- & 6& 6 \\
1.4 & -- & -- & 7& 7 \\
1.6 & -- & -- & 7& 7 \\
1.8 & -- & -- & 7& 7 \\
2.0 & -- & 5& -- & 8 \\
\bottomrule
\end{tabular}
\end{table}

\Cref{tab:cube-64-robustness} outlines the robustness test result. Here, ``PT'' denotes the baseline pre-trained neural operator optimized solely via macroscopic, data-driven losses. As observed in the first column, the WS-Newton loop accelerated by the unregularized, purely data-driven surrogate successfully converges in only two test cases ($\tau \in \{0.2, 0.8\}$), diverging across all other highly strained states. 

To expand the radius of convergence and filter out localized prediction errors, we inject a nonlinear preconditioning step utilizing a Nonlinear GMRES (NGMRES) solver immediately before entering the linearized system updates. This macro-smoothing step effectively removes localized high-gradient artifacts, enhancing the robustness. While this combined ``PT + NGMRES'' framework improves the success rate from $20\%$ to $50\%$, it remains insufficient to resolve the severe instabilities triggered at highly nonlinear loading states such as $\tau = 1.2$ or $\tau = 1.8$.

To overcome this persistent barrier, we apply the physics-informed fine-tuning strategy (denoted as ``FT'') again, adjusting the underlying neural operator parameters $\theta$ directly against the governing energy. The fine-tuning phase comprises $60$ AdamW optimization steps evaluating the energy across $30$ equispaced $\tau$ values per iteration. This explicit energy minimization dramatically stabilizes the hybrid method, eliminating the majority of previous failure points, with the extreme loading case of $\tau = 2$ remaining the sole instance of failure. Ultimately, combining the fine-tuned neural operator with the nonlinear smoothing loop (``FT + NGMRES'') makes the hybrid method converge to all test cases, demonstrating the viability of the framework for large-scale systems.

\subsubsection*{Numerical result at a finer mesh with super-resolution}
Finally, we evaluate the scalability limits of the proposed hybrid architecture under a multi-million-scale discretization utilizing an $N^3 = 128^3$ mesh. This extreme configuration comprises approximately $6.44 \times 10^6$ DOFs. To circumvent the prohibitive offline computational costs associated with training neural operators directly at this scale, we reuse the neural operator trained on the coarser $N^3 = 64^3$ grid.

We investigate two distinct super-resolution projection strategies: \emph{direct} inference and \emph{interpolation}. The direct inference strategy leverages the mesh-free nature of neural operators, evaluating the operator directly at the $128^3$ nodal coordinate matrix. Conversely, the interpolation strategy executes inference at the native resolution scale ($64^3$) before projecting the fields onto the refined $128^3$ mesh via linear interpolation. 

To resolve the Newton direction from these super-resolved states, we initially implemented the standard AMG preconditioned CG linear solver. However, this combination triggered immediate numerical stagnation. As the spatial mesh size $h$ refines, the condition number of the discrete system Jacobian scales accordingly, drastically magnifying sensitivity to localized field perturbations. Because the underlying hyperelasticity formulation \cref{eq:hel} is highly nonlinear, the localized errors embedded within the super-resolved field projections may break the positive definiteness of the initial discrete Jacobian, as discussed in \cref{sec:spectral_analysis}. To bypass this spectral instability, we restructure the downstream solver within an inexact Newton framework. We substitute the strict inner linear system solver with the FGMRES method, choosing linear tolerance adaptively following~\cite{eisenstat1996choosing}. This variable tolerance configuration serves as an algorithmic low-pass filter, making the outer iterative path robust to super-resolution noise. Equipped with this stabilization trick, the hybrid framework overcomes the stagnation and drives the system to convergence.

\begin{figure}[t]
    \centering
    \includegraphics[width=1\linewidth]{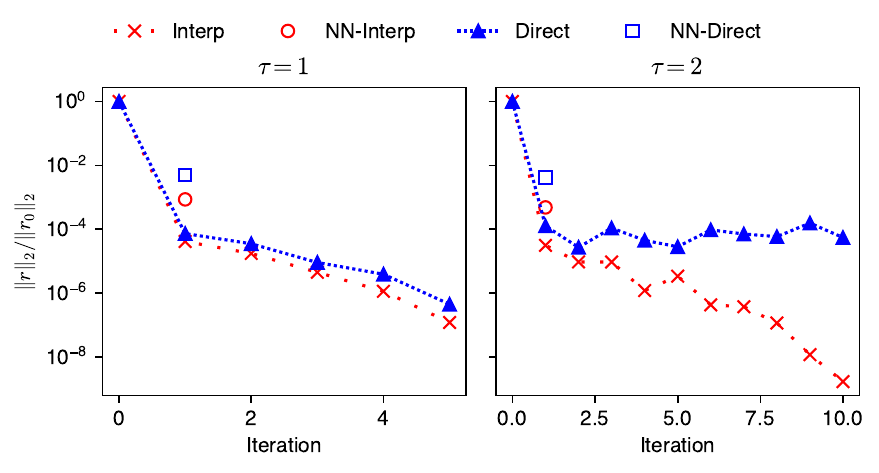}
    \caption{Convergence plots of the relative residual norm $\|r_k\|_2 / \|r_0\|_2$ at \(N^3 = 128^3\) mesh for target loading steps $\tau = 1$ and $\tau = 2$.}
    \label{fig:128cube-residuals}
\end{figure}

\Cref{fig:128cube-residuals} presents convergence plots for two super-resolution strategies evaluated at \(\tau \in \{1, 2\}\). The neural operator-driven initializations immediately reduce the initial relative residual norm by approximately two to three orders of magnitude (circle and square). Beginning from these optimized initial configurations, WS-Newton successfully reduces the residual norm below the threshold for \(\tau = 1\). However, the \emph{direct} strategy does not work for \(\tau = 2\). Because of this reason, we shall only consider the \emph{interpolation} strategy for the following comparison study with IC-Newton. Moreover, we note that the global convergence profiles do not exhibit quadratic rates; this linear-to-superlinear degradation is the expected trade-off when adopting loose, adaptive inner solver tolerances to regularize indefinite Jacobian matrices.

\begin{table}[t]
\centering
\caption{Performance comparison between IC-Newton and the zero-shot super-resolved WS-Newton on the $128^3$ mesh. Values in parentheses indicate the total number of sequential continuation steps. All simulations were benchmarked on 128 cores.}
\label{tab:cube-speedup}
\resizebox{\columnwidth}{!}{
\begin{tabular}{llcccc}
\hline
$\tau$ & \textbf{Method} & $\|r\|_2$ & \textbf{Iterations} & \textbf{Wall Time} (s) & \textbf{Speedup} \\ \hline
\multirow{2}{*}{$1$} & IC-Newton (10 steps) & $1.68 \times 10^{-6}$ & 22 & 538.2 & $1.00$ (baseline) \\
& WS-Newton & $1.45 \times 10^{-7}$ & 7  & 132.9 & $4.05\times$ \\ \hline
\multirow{2}{*}{$2$} & IC-Newton (20 steps) & $2.21 \times 10^{-6}$ & 42 & 1074.0 & $1.00$ (baseline) \\
& WS-Newton & $4.44 \times 10^{-7}$ & 10 & 199.7 & $5.38\times$ \\ \hline
\end{tabular}
}
\end{table}

\Cref{tab:cube-speedup} summarizes the performance comparison between the WS-Newton and the baseline IC-Newton evaluated at loading steps $\tau = 1$ and $\tau = 2$. For the IC-Newton baseline, the number of loading intervals was selected to minimize total computational wall time (1 step per \(\Delta \tau = 0.1\)). The classical incremental technique demands 22 cumulative iterations across 10 steps to resolve the $\tau = 1$ path, increasing up to 42 iterations across 20 steps at $\tau = 2$. By contrast, the WS-Newton framework with interpolation converges within only 7 and 10 iterations, respectively. This reduction translates into a $4.05\times$ wall-clock speedup for \(\tau =1\) and a 
$5.38\times$ acceleration for $\tau = 2$ , demonstrating excellent computational efficiency gains.

\begin{figure}[ht]
    \centering
    \includegraphics[width=1\linewidth]{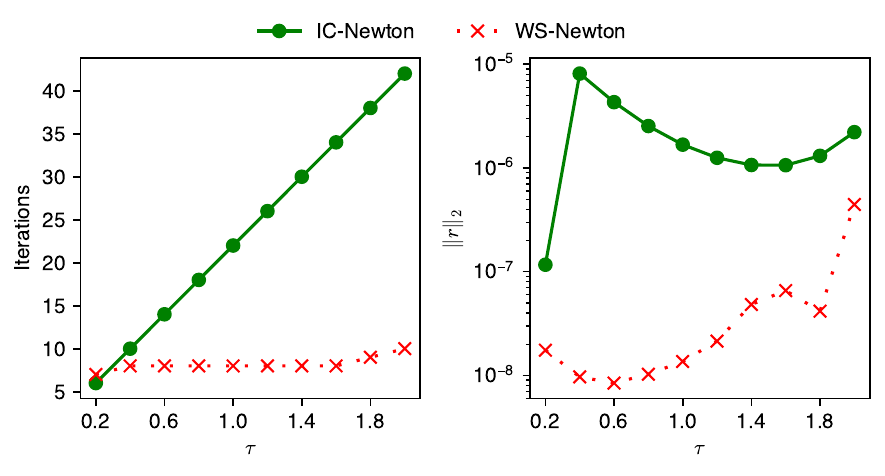}
    \caption{Quantitative performance profiles across a parameter sweep $\tau \in \{0.2, 0.4, \dots 2\}$ at the $128^3$ grid scale. Left: Newton iteration counts till convergence. Right: Final converged relative residual norm magnitudes.}
    \label{fig:128cube-iterations}
\end{figure}

Lastly, \cref{fig:128cube-iterations} tracks solver behavior across a comprehensive parametric sweep of the displacement boundary condition, $\tau \in \{0.2, 0.4, \dots, 2.0\}$. The left panel maps the total Newton iteration counts, and the right panel presents the final relative residual norms, demonstrating the strict preservation of robustness at large-scale dimensions. We note that we intentionally set a stricter convergence criterion for WS-Newton in order to ensure a fair comparison for iteration counts.

\section{Conclusion}
\label{sec:conclusion}
In this work, we presented the numerical failure modes that arise within recently emerging neural operator-based hybrid Newton solvers. We demonstrated that while the NP-Newton method provides better robustness, its training dataset requires storing trajectories, which limits its scalability to multi-million DOF systems. Conversely, while the WS-Newton method exhibits worse robustness when optimized purely via data-driven losses, it does not require a trajectory dataset, hence more scalable. Furthermore, our eigenvalue analysis revealed that errors from neural operator predictions can introduce negative eigenvalues into the discrete system Jacobian matrix, causing Krylov iterative linear solvers to stall or diverge. We showed that a physics-informed fine-tuning phase serves as an efficient remedy, expanding the solver's radius of convergence. Taken together, our findings establish that specialized treatments are mandatory to construct stable, effective hybridizations of machine learning and the Newton method.

Several critical avenues for future research remain open. First, because the practical viability of hybrid solvers depends fundamentally on the offline training phase, developing efficient data collection pipelines, such as utilizing optimal experimental design~\cite{shin2016nonadaptive}, remains important. Second, establishing formal theoretical guarantees is indispensable; while \cref{prop:ws_newton} provides an insight into the convergence of the WS-Newton method, a parallel convergence proof for the NP-Newton loop remains unknown. Finally, transitioning these hybrid methods from academic research benchmarks to widespread industrial deployment demands investigation under more complex and challenging scenarios.

\appendix
\section{Computating environments}
Neural operator predictions and automatic differentiation are handled by PyTorch~\cite{paszke2019pytorch}, while the finite element discretizations and underlying linear algebra systems are handled by FEniCS~\cite{ufl2014, dolfinx2023}, PETSc~\cite{petsc2025}, and Gmsh~\cite{geuzaine2009gmsh}. Figures are generated using Matplotlib~\cite{hunter2007matplotlib} and NumPy~\cite{harris2020array}. MUMPS direct linear solver package~\cite{amestoy2001fully} is used by default.

Apple M3 CPU in \cref{sec:pedagogical_example}. AMD EPYC 9554 64-core processors in the other examples.

\section{Fixed-point neural operator}\label{sec:fpno}
FPNO was first proposed in \cite{lee2025neural}. It consists of backbone \(B^\theta\) and scaling \(S^\theta\) networks. It approximates the solution \(u^*\) by
\[
    u^\theta = u_k + \tanh\left(\|r_k\|_2 S^\theta(r_k / \|r_k\|_2)\right)B^\theta(u_k).
\]
The \(\tanh\left(\|r_k\|_2 S^\theta(r_k / \|r_k\|_2)\right)\) part ensures that the update size is small enough whenever the current iterate \(u_k\) is close to the solution \(u^*\), because \(\|r_k\|_2\) will be small in such a case. It is possible to choose arbitrary neural networks for the backbone and scaling networks.

\section{Details for neural operators}
\Cref{tab:hyperparameter} records architectural details for neural operators employed in \Cref{sec:examples}.
\begin{table}[ht]
\caption{Details for neural operators}\label{tab:hyperparameter}
\resizebox{\columnwidth}{!}{
\begin{tabular}{lcc}\toprule
\textbf{examples} & \textbf{architecture} & \textbf{hyperparameters} \\\midrule
\Cref{sec:pedagogical_example} & MLP & [2, 10, 10, 2], \(\tanh\) \\
\Cref{sec:ldc} & Transolver++ & \(d_h = 128\), 8 heads, 32 slices, \(\text{gelu}\), depth (5, 2) for \((B^\theta, S^\theta)\) \\
\Cref{sec:2dhel} &MLP & The same as \cite{lee2025neural}. \\
\Cref{sec:3dhel} & Transolver++ & \(d_h = 128\), 8 heads, 32 slices, \(\text{gelu}\), depth 5 \\
\bottomrule
\end{tabular}
}
\end{table}

\section*{Acknowledgement}
The first author gratefully acknowledges Ehsan Kharazmi for his valuable comments during the early stages of this work. Part of this research was conducted using the computational resources and services provided by the Center for Computation and Visualization at Brown University. This research is supported by DOE-MMICS SEA-CROGS DE-SC0023191.

\bibliographystyle{assets/siamplain}
\bibliography{library}
\end{document}